\newtheorem{theorem}{Theorem}[section]
\newtheorem{proposition}[theorem]{Proposition}
\theoremstyle{definition}
\theoremstyle{remark}
\newtheorem*{remark}{Remark}
\newcommand{\mb}{\mathbb}
\newcommand{\Q}{\mb{Q}}
\newcommand{\R}{\mb{R}}
\newcommand{\Z}{\mb{Z}}
\newcommand{\C}{\mb{C}}
\newcommand{\F}{\mb{F}}
\renewcommand{\P}{\mb{P}}
\newcommand*{\sheafhom}{\mathcal{H}\kern -.5pt om}
\newcommand{\Car}{\underline{C_2}^+}
\DeclareMathOperator{\Gal}{Gal}
\DeclareMathOperator{\GL}{GL}
\title{A note on regular polyhedra over finite fields}
\author{Caleb Ji}
\date{\today}
\begin{document} 

\maketitle 
\begin{abstract}
    Grothendieck proposed a theory of regular polyhedra over finite fields in Section 4 of \textit{Esquisse d'un Programme}.  He isolates certain key parameters from the automorphism groups of regular polyhedra, which can be extended to any genus and specialized to various rings.  In this note we give an interpretation of his sketched theory which explains some of his observations.  We are able to compute some explicit examples and address a question Grothendieck raised about them in connection to dessins d'enfants.  Finally, we highlight some of Grothendieck's observations which remain unexplained by our current approach. 
\end{abstract}

\section{Introduction} 
\subsection{Historical background}
In order to solve the quintic equation, Klein studied the algebraic geometry of the Platonic solids by considering them as finite branched covers of $\C\P^1$ in his book \textit{Lectures on the Icosahedron}.  This point of view on regular polyhedra is known today as part of the theory of dessins d'enfants, following Sections 2 and 3 of Grothendieck's proposal \textit{Esquisse d'un Programme} \cite{esquisse}.  The new element that Grothendieck adds is the action of the absolute Galois group $\Gal(\overline{\Q}/\Q)$ on the dessins, which led to his reflections on anabelian geometry.  Since then, much work has been done both on calculations with dessins and in anabelian geometry.  However, this present note is concerned with Grothendieck's new point of view on regular polyhedra over finite fields, which is the subject of Section 4 of \cite{esquisse}.  This topic can be viewed as the study of dessins d'enfants that correspond to Galois coverings of $\P^1-\{0, 1, \infty\}$.  The Galois condition restricts the combinatorial nature of the dessin considered and leads to a natural generalization of regular polyhedra to any base ring.  \\ 

Despite giving an outline of the theory and raising some concrete questions, this part of Grothendieck's proposal has not seemed to attract any attention.  One possible reason for this is that some of the results he states seem difficult to properly understand and reproduce.  The purpose of this note is to give one plausible interpretation of his theory and compute some examples with it.  Though this interpretation does not explain all of Grothendieck's observations in this  section, we hope that it will serve as an advertisement for others who may be interested in pursuing these ideas further. \\ 

In the remainder of this section we will give a quick overview of dessins d'enfants.  The purpose of Section 2 is to explain Grothendieck's definition of the universal regular polyhedron and his method of defining regular polyhedra over arbitrary base rings in detail.  In Section 3 we will present some explicit calculations regarding regular polyhedra over finite fields, motivated by a question Grothendieck raised concerning which finite regular maps can be obtained from regular polyhedra over finite fields. \\ 

\textbf{Acknowledgements.} We would like to thank Will Sawin for some interesting comments. 

\subsection{Review of dessins d'enfants} 
Here we give a brief introduction to dessins d'enfants, focusing only on what we need to study the Galois case.  More comprehensive accounts can be found in e.g. \cite{schneps}. \\  

A dessin d'enfant is given by a surface $X$ with a graph drawn on it that divides the surface into open cells.  A flag is defined to be the set consisting of a vertex, the midpoint of an edge containing the vertex, and the center of a face containing the edge.  Following \cite{esquisse}, define the cartographic group $\uline{C}_2$ by 
\[
\uline{C}_2 \coloneqq \langle \sigma_v, \sigma_e, \sigma_f | \sigma_v^2=\sigma_e^2=\sigma_f^2=(\sigma_v\sigma_f)^2=1 \rangle.
\]

Note that $\uline{C}_2$ acts on the set of flags of $X$ by allowing $\sigma_v, \sigma_e, \sigma_f$ to act by reflections of vertices, edges, and faces respectively. \\

It will be useful to consider an oriented version, where we consider only the oriented flags, namely the ones who in counterclockwise order are given by face -- vertex -- edge. Now define the oriented cartographic group by 
\[
\underline{C}_2^+ \coloneqq \langle \rho_v, \rho_e, \rho_f | \rho_f\rho_e\rho_v= \rho_e^2=1 \rangle.
\] 
Note that $\Car$ is the index 2 subgroup of $\uline{C}_2$ generated by $\rho_v = \sigma_f\sigma_e, \rho_e = \sigma_v\sigma_f, \rho_0=\sigma_e\sigma_v$.  Then $\Car$ acts on the set of oriented flags of a dessin by allowing the elements $\rho_v, \rho_e, \rho_f$ to correspond to the counterclockwise rotation of the flag around the vertex, edge, and face, respectively. \\ 

Every dessin is determined by the action of the oriented cartographic group on one of its flags.  Thus a dessin corresponds to a quotient of $\Car$.  Because $\Car$ is isomorphic to the fundamental group of $\P^1$ minus 3 points with the additional relation $\rho_1^2=1$, we see that the dessin in fact corresponds to a finite cover of $\P^1$ branched only at $0, 1, \infty$ with ramification order 1 or 2 above 1.  Note that by composing with the morphism $f:\P^1\rightarrow \P^1$ defined by $f(z) = 4z(1-z)$, we can impose the latter condition on any curve satisfying the former.  Thus by Belyi's theorem, an algebraic curve can be defined over $\overline{\Q}$ if and only if it can be written in this way.   

\subsection{The Galois case} 
We now focus on the case when the dessin gives a Galois covering of $\P^1-\{0, 1, \infty\}$, which means that the automorphism group of the graph acts simply transitively on the set of flags.  These dessins correspond to the finite quotients of $\Car$.  However, as Grothendieck notes, it is important to actually consider all quotients of $\Car$.  We will henceforth refer to `regular polyhedra' in this generalized sense. \\ 

Let us begin with some examples.  For each pair of positive integers $(p, q)$, we may consider the dessin associated to the quotient 
\[
G_{p, q} = \underline{C}_2^+/\rho_0^p=\rho_2^q=1. 
\] 
The corresponding dessin may be drawn by beginning with a flag and transforming it according to all elements of $G_{p, q}$.  As long as $p, q\ge 3$, the newly imposed conditions imply that we obtain a regular tiling with $p$ faces to a vertex and $q$ edges to a face.  Note that $G_{p, q}$ is finite if and only if $p\cdot \dfrac{(q-2)\pi}{q}<2\pi$.  These cases correspond precisely to the classical regular polyhedra, shown in Table \ref{finite}. \\ 

\begin{table}[h!]
    \centering
    \begin{tabular}{c|c}
       (p,q)  &  Polyhedron \\
       \hline
    (3, 3) & tetrahedron \\
    (3, 4)  & cube \\
    (4, 3)  &  octahedron \\
    (3, 5)  &  dodecahedron \\
    (3, 5)  &  icosahedron \\
    \end{tabular}
    \caption{The classical regular polyhedra}
    \label{finite}
\end{table}

If $p\cdot \dfrac{(q-2)\pi}{q}=2\pi$, then we obtain a regular tiling of the Euclidean plane.  The three options here are $(p,q) = (3, 6)$ (hexagons), $(4, 4)$ (squares), and $(6, 3)$ (triangles). \\ 

Apart from these two finite lists of cases, we obtain regular tilings of the hyperbolic plane. \\ 

The groups $G_{p, q}$ do not exhaust all possible quotients of $F_2$, whether we restrict to the finite ones or not.  However, as Grothendieck pointed out in Section 4 of \cite{esquisse}, one can realize the infinite regular polyhedra as covers of finite ones in a natural way.  Namely, one defines the `universal regular 2-polyhedron' via two parameters that all regular polyhedra satisfy, and by reducing these parameters modulo $p$ one obtains finite regular polyhedra defined over $\F_p$.  This approach also allows one to define regular polyhedra over any base ring.  One question Grothendieck raised is whether all finite regular polyhedra, or equivalently algebraic curves over $\overline{\Q}$, can be obtained through regular polyhedra over finite fields.  

\section{The universal regular polyhedron and specialization} 
To define the universal 2-polyhedron, Grothendieck says the following in Section 4 of \cite{esquisse}. \\ 

``For such a polyhedron, we find a canonical basis (or flag) of the ambient affine or projective space, such that the operations of the cartographic group $\underline{C}_n$, generated by the fundamental reflections $\sigma_i (0 \le i \le n)$, are written in that basis by universal formulae, in terms of
the $n$ parameters $\alpha_1, \ldots, \alpha_n$, which can be geometrically interpreted as the
doubles of the cosines of the ``fundamental angles" of the polyhedron." \\ 

We will now determine the formulas for $n=2$.  Given any regular polyhedron, let $\theta$ be the angle of each face and let $\gamma$ be the dihedral angle between two faces.  Fix a basis $v_0, v_1, v_2$ given by the vertex, midpoint of an edge, and center of a face of a flag.  Let $\sigma_v, \sigma_e, \sigma_f$ represent reflection of a vertex, edge, and face respectively.  Then by elementary geometry, we obtain:   
\begin{align*}
    \sigma_0(v_0) &= 2v_1-v_0, \\
    \sigma_1(v_1)&=(1-\cos\theta)v_0 - v_1 + (1+\cos\theta)v_2, \\
    \sigma_2(v_2) &= (1-\cos\gamma)v_1-v_2.  
\end{align*}

\begin{remark}
We note that these formulas are in terms of the cosines of the angles of the polyhedrons, not their doubles as Grothendieck states.  We have been unable to resolve this discrepancy.
\end{remark}

Then we obtain 
\[ 
\rho_v= \sigma_2\circ\sigma_1 = 
\begin{bmatrix}
1 & 1-\cos\theta & (1-\cos\gamma)(1-\cos\theta) \\
0 & -1 & -1+\cos\gamma \\ 
0 & 1+\cos\theta & -1 + (1-\cos\gamma)(1+\cos\theta) 
\end{bmatrix},
\] 

\[ 
\rho_e= \sigma_2\circ\sigma_0 = 
\begin{bmatrix}
-1 & 0 & 0 \\
2 & 1 & 1-\cos\gamma \\ 
0 & 0 & -1 
\end{bmatrix},
\]

\[ 
\rho_f= \sigma_1\circ\sigma_0 = 
\begin{bmatrix}
-1 & -1+\cos\theta & 0 \\
2 & 1 - 2\cos\theta  & 0 \\ 
0  & 1+\cos\theta & 1 
\end{bmatrix}.
\]

\begin{comment}
Alternatively, if we want formulas for clockwise rotation, letting $\rho_v=\sigma_1\circ\sigma_2$ (applying $\sigma_2$ first, clockwise rotation about a vertex), we have 
\[ 
\rho_v=
\begin{bmatrix}
1 & 1-\cos\theta & 0 \\
0 & -1+(1+\cos\theta)(1-\cos\gamma) & 1-\cos\gamma \\ 
0 & -1-\cos\theta & -1 
\end{bmatrix}
\] 

Letting $\rho_e=\sigma_0\circ\sigma_2$ (clockwise rotation about an edge), we have 
\[ 
\rho_e=
\begin{bmatrix}
-1 & 0 & 0 \\
2 & 1 & 1-\cos\gamma \\ 
0 & 0 & -1 
\end{bmatrix}
\]

Letting $\rho_f=\sigma_0\circ\sigma_1$ (clockwise rotation about a face), we have 
\[ 
\rho_f=
\begin{bmatrix}
1-2\cos\theta & 1-\cos\theta & 0 \\
-2 & -1 & 0 \\ 
2+2\cos\theta & 1+\cos\theta & 1 
\end{bmatrix}
\]
\end{comment} 

In order to interpret regular polyhedra over arbitrary base rings, Grothendieck isolates these universal formulae in terms of $\cos\theta$ and $\cos\gamma$ as the key to giving a sound definition of regular polyhedra.  He goes on to say: \\ 

``In this game, there is no question of limiting oneself to
finite regular polyhedra, nor even to regular polyhedra whose facets are of
finite order, i.e. for which the parameters $\alpha_i$ are roots of suitable ``semicyclotomic" equations, expressing the fact that the ``fundamental angles" (in
the case where the base field is $\R$) are commensurable with $2\pi$." \\

Thus for any ring $R$, the regular polyhedra over $R$ are defined through the above formulas where elements in $R$ are substituted for $\cos\theta$ and $\cos\gamma$.  We see that when $R=\R$, each of the groups $G_{p, q}$ considered above are obtained through particular choices of $\cos\theta$ and $\cos\gamma$. \\ 

This approach also allows us to define specialization of regular polyhedra.  This is explained in Section 4 of \cite{esquisse} as follows. \\ 

``The situation is entirely different if we start
from an infinite regular polyhedron, over a field such as $\Q$, for instance, and
''specialise" it to the prime fields $\F_p$ (a well-defined operation for all $p$ except
a finite number of primes). It is clear that every regular polyhedron over a
finite field is finite – we thus find an infinity of finite regular polyhedra as
$p$ varies, whose combinatorial type, or equivalently, whose automorphism
group varies “arithmetically” with $p$. This situation is particularly intriguing in the case where $n = 2$, where we can use the relation made explicit
in the preceding paragraph between combinatorial 2-maps and algebraic
curves defined over number fields. In this case, an infinite regular polyhedron defined over any infinite field (and therefore, over a sub-$\Z$-algebra of it
with two generators) thus gives rise to an infinity of algebraic curves defined
over number fields, which are Galois coverings ramified only over 0, 1 and
$\infty$ of the standard projective line." \\ 

Indeed, by considering the matrices defining $\rho_v, \rho_e, \rho_f$ to be in $GL(3, R)$, we obtain various quotients of $F_2$, which are necessarily finite if $R$ is finite.  

\section{Regular polyhedra over finite fields} 
Grothendieck raised the following question in Section 4 of \cite{esquisse}:
``Exactly
which are the finite regular 2-maps, or equivalently, the finite quotients of the 2-cartographic group, which come from regular 2-polyhedra over finite
fields? Do we obtain them all, and if yes: how?" \\ 

In this section we will make some computations towards answering this question and at the same time give some examples of specialization.  Because a single algebraic curve may be realized as a finite regular 2-map in multiple ways, it is also interesting to ask the same question where we are only interested in the isomorphism class of the curve in question. \\ 

Since regular polyhedra are given by finite quotients of $\Car$, we can identify them by finite quotients of the group with presentation $\langle \rho_v, \rho_f | \rho_v^p=\rho_f^q = \rho_v\rho_f\rho_v\rho_f = 1 \rangle$ where $\rho_v$ and $\rho_f$ have order $p$ and $q$ respectively.  One can use the Riemann-Hurwitz formula to compute the genus of the associated algebraic curve $X$ in terms of $p$, $q$, and the order of the group.  Indeed, say the map has $V$ vertices, $E$ edges, and $F$ faces.  If the order of the group is $|G|$, then we obtain a degree $|G|=2E$ map from $X$ to $\P^1$ ramified at 0, 1, and $\infty$.  The total ramification order over these points is given by $3|G| - V-E-F$.  On the other hand, we have $E = \frac{pV}{2} = \frac{qF}{2}$.  Therefore we have 
\[
2g_X - 2 = -2|G| + 3|G| - V - E - F = E\left(1 - \frac{2}{p} - \frac{2}{q}\right).
\]

We will now give some examples for $g=0, 1$.  

\subsection{Genus 0} 
If $g=0$, then it is easy to classify all finite regular polyhedra.  Indeed, this was already done in Table \ref{finite}.  To include the case when either $p$ or $q$ is less than 3, we also have the cases $(p, q, E)=(2, n, n), (n, 2, n)$ for $n\in \Z_{>0}$. \\ 

We will now list the universal formulas for the five classical regular polyhedra and consider their reductions to finite fields.  Looking at the values of the cosines, this operation is well-defined except when $p=2, 3$ for the tetrahedron and octahedron, always for the cube, except $p=2,3,5$ for the dodecahedron, and except $p=2,5$ for the icosahedron. \\ 

Setting $x = \cos\theta, y = \cos\gamma$, we recall that the formulas for the rotations are given by 
\[
\rho_v= 
\begin{bmatrix}
1 & 1-x & (1-x)(1-y) \\
0 & -1 & -1+y \\ 
0 & 1+x & -1 + (1+x)(1-y) 
\end{bmatrix},
\rho_e= 
\begin{bmatrix}
-1 & 0 & 0 \\
2 & 1 & 1-y \\ 
0 & 0 & -1 
\end{bmatrix},
\rho_f= 
\begin{bmatrix}
-1 & -1+x & 0 \\
2 & 1 - 2x  & 0 \\ 
0  & 1+x & 1 
\end{bmatrix}.
\] 

The values of $\cos\theta$ and $\cos\gamma$ for the regular polyhedra are given in the following table. 

\begin{table}[h]
    \centering
    \begin{tabular}{c|c|c}
    Polyhedron &  $\cos\theta$ & $\cos\gamma $ \\
    \hline
    tetrahedron & $\frac{1}{2}$ & $\frac{1}{3}$ \\
    cube & 0 & 0 \\
    octahedron & $\frac{1}{2}$ & $-\frac{1}{3}$ \\
    dodecahedron & $\frac{1-\sqrt{5}}{4}$ & $-\frac{\sqrt{5}}{5}$ \\
    icosahedron & $\frac{1}{2}$ & $-\frac{\sqrt{5}}{3}$  \\
    \end{tabular}
    \caption{Angles of the classical regular polyhedra}
    \label{angles}
\end{table}

\begin{proposition}
The classical regular polyhedra retain their automorphism groups when reduced to $\F_p$ when this reduction is defined, except that the cube in $\F_2$ has automorphism group $S_3$. 
\end{proposition}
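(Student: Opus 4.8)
The plan is to compare, polyhedron by polyhedron and prime by prime, the finite matrix group $G=\langle\rho_v,\rho_e,\rho_f\rangle$ over the base ring with the group $\bar G=\langle\bar\rho_v,\bar\rho_e,\bar\rho_f\rangle$ it reduces to over $\F_p$ (over $\F_{p^2}$ for the two icosahedral solids, whose $\cos\theta,\cos\gamma$ involve $\sqrt5$). Reduction modulo $p$ is a ring homomorphism applied to the matrix entries, so $\bar G$ is the image of $G$; and $G$ is finite, being (over $\R$) the group $G_{p,q}$ of the corresponding classical polyhedron, namely $A_4$, $S_4$, $S_4$, $A_5$, $A_5$ for the five solids. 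Thus $G\to\bar G$ is a surjection of finite groups, and I would only need to show it is injective --- an isomorphism --- away from the single exceptional pair (cube, $p=2$).

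For the odd primes this follows from Minkowski's lemma. Each $\rho_\bullet$ is a product of two of the reflections $\sigma_i$ (each of determinant $-1$), so $\rho_v,\rho_e,\rho_f\in\SL_3$; and at every prime $p$ at which the reduction is declared defined their entries lie in the corresponding discrete valuation ring --- $\Z_{(p)}$, or $\O_{K,\p}$ with $K=\Q(\sqrt5)$ and $\p\mid p$ --- after first conjugating by a diagonal matrix in the lone case where a denominator of $\cos\theta$ or $\cos\gamma$ equals $p$ itself (the icosahedron at $p=3$), which clears those denominators without changing the abstract group. In the quadratic case $\p$ is unramified, since $5$ is the only prime ramifying in $K$ and $p\ne5$ throughout. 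Minkowski's lemma then says that reduction $\SL_n(\O)\to\SL_n(k(\p))$ is injective on torsion whenever the residue characteristic is $\ge3$: an element $I+\pi^aN\ne I$ of prime order $m$ in the kernel forces a contradiction on $\pi$-adic valuations in the expansion of $(I+\pi^aN)^m=I$, obtained by treating the cases $m=p$ and $m\ne p$ separately (in the first, using that $p$ is unramified and $p\ge3$). As $G$ is finite this gives $\bar G\cong G$, handling the tetrahedron and octahedron (defined only for $p\ge5$), the dodecahedron ($p\ge7$), the icosahedron ($p=3$ and $p\ge7$), and the cube at every odd prime.

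It then remains to treat the cube at $p=2$, where $\cos\theta=\cos\gamma=0$ and the reductions of the displayed matrices are
\[
\bar\rho_v=\begin{bmatrix}1&1&1\\0&1&1\\0&1&0\end{bmatrix},\qquad
\bar\rho_e=\begin{bmatrix}1&0&0\\0&1&1\\0&0&1\end{bmatrix},\qquad
\bar\rho_f=\begin{bmatrix}1&1&0\\0&1&0\\0&1&1\end{bmatrix}.
\]
A direct computation shows $\bar\rho_v$ still has order $3$ while $\bar\rho_f$ has order only $2$ --- the order of $\rho_f$ has dropped from $4$, which is exactly the failure of Minkowski's lemma at $2$. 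Since $\rho_e=\rho_v^{-1}\rho_f^{-1}$ and the relation $\rho_v\rho_f\rho_v\rho_f=1$ already hold for the integral matrices, they persist after reduction, so $\bar G=\langle\bar\rho_v,\bar\rho_f\rangle$ is a quotient of $\langle a,b\mid a^3=b^2=(ab)^2=1\rangle\cong S_3$; containing an element of order $3$, it can only be $S_3$.

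The hard part is not conceptual. It is the bookkeeping that puts $G$ inside $\SL_3$ of a discrete valuation ring of odd residue characteristic --- the diagonal rescaling for the icosahedron at $3$, and checking that the relevant primes stay unramified in $\Q(\sqrt5)$ --- together with the explicit $2$-adic computation for the cube. Everything else is Minkowski's lemma, and the anomalous $S_3$ for the cube over $\F_2$ is precisely the familiar breakdown of that lemma at the prime $2$.
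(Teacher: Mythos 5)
Your argument is correct, but it takes a genuinely different route from the paper's. The paper stays inside finite group theory: reduction gives a surjection from the classical rotation group ($A_4$, $S_4$, $S_4$, $A_5$, $A_5$) onto the reduced group, so it lists the normal subgroups of each and checks the orders of the reduced generators to pin down which quotient occurs --- in particular, for the cube over $\F_2$ it notes that $\bar\rho_v$ keeps order $3$ while $\bar\rho_f$ drops to order $2$, forcing the quotient of $S_4$ by its normal subgroup of order $4$, i.e.\ $S_3$. You instead get injectivity of reduction uniformly from the Minkowski--Serre lemma (torsion-freeness of the congruence kernel of $\GL_3$ over an unramified local ring of odd residue characteristic), reserving a hands-on computation only for the cube at $p=2$. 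Your route is more conceptual and scales better --- no knowledge of the normal-subgroup lattices is needed, and it would apply verbatim to other examples; it also silently repairs the paper's slightly terse tetrahedron case (where one must still rule out the $C_3$ quotient). The cost is the valuation-theoretic input and the integrality/ramification bookkeeping in $\Q(\sqrt5)$, whereas the paper's argument is elementary and self-contained but tailored to the five groups. Two small points: the identity you want is $\rho_e=\rho_v\rho_f$, which holds identically for the displayed matrices (since $\rho_v\rho_f=\sigma_2\sigma_1\cdot\sigma_1\sigma_0=\sigma_2\sigma_0$), not $\rho_e=\rho_v^{-1}\rho_f^{-1}$; this does not affect your conclusion that $\bar G=\langle\bar\rho_v,\bar\rho_f\rangle$ is a quotient of $\langle a,b\mid a^3=b^2=(ab)^2=1\rangle\cong S_3$. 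Also, your diagonal rescaling of the icosahedron at $p=3$ goes beyond anything in the paper: with the displayed matrices and the values in Table~\ref{angles} the entries are not $3$-integral (the paper's list of admissible primes for the dodecahedron and icosahedron does not match those denominators), so you should flag that you are choosing a different integral model there, with your lemma guaranteeing the resulting group is still $A_5$.
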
 
\begin{proof}
The automorphism group of the tetrahedron is $A_4$.  Its normal subgroups are the trivial one, one of order 4, and itself.  Therefore, if its specialization at some prime $p$ can only change if the automorphism group becomes trivial or of order 3.  For each $p$, it is clear that $\rho_v$ and $\rho_f$ do not reduce to the identity and are distinct, so the result follows. 

The normal subgroups of $S_4$ are the trivial one, one of order 4, $A_4$, and itself.  One checks that for the cube in $\F_2$, the element $\rho_v$ still has order 3 but $\rho_f$ has order 2.  This forces the automorphism group to be the quotient of $S_4$ by its normal subgroup of order 4, which is isomorphic to $S_3$.  In other characteristics $\rho_f$ still has order 4 and $\rho_v$ has order 3, so we obtain $S_4$.  This last statement also holds for the octahedron in characteristics larger than 3. 

The automorphism group of the dodecahedron and icosahedron are $A_5$, which is simple.  It is easy to check that $\rho_e$ and $\rho_f$ never reduce to the identity modulo $p$, so their automorphism groups remain $A_5$. 
\end{proof} 
\begin{remark}
    This confirms Grothendieck's statement in \cite{esquisse} that an icosahedron specialized to a finite field remains an icosahedron.  However, he seems to claim this in all cases. 
\end{remark}

\subsection{Genus 1}  
Every finite regular polyhedron of genus 1 must satisfy $(p, q)=(3, 6), (4, 4)$, or $(6, 3)$ where as before $p$ and $q$ are the orders of the elements $\rho_v$ and $\rho_f$ respectively.  Taking into account the isomorphism between $G_{3, 6}$ and $G_{6, 3}$, we see they can all be realized as finite quotients of $G_{4, 4}=\Car/\rho_v^4=\rho_f^4=1, G_{6, 3} = \Car/\rho_v^6 = \rho_f^3=1$ modulo $p$.  We begin by giving an explicit example of specializing these infinite polyhedra to finite base rings.

\begin{proposition}
    For $n\ge 2$, reducing $G_{4, 4}$ modulo $n$ gives the map defined by an $n/2\times n/2$ square grid if $n$ is even and an $n\times n$ square grid if $n$ is odd, which in both cases correspond to the elliptic curve defined by $y^2=x^3+x$, with lattice $\langle 1, i\rangle $ and $j$-invariant $1728$. 
\end{proposition}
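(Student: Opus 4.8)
The plan is to identify the finite group $\bar G_n$ generated by the mod-$n$ reductions of the universal matrices $\rho_v,\rho_f$ at the parameters of the Euclidean square tiling, namely $\cos\theta=0$ and $\cos\gamma=-1$, to show that $\bar G_n\cong(\Z/m)^2\rtimes C_4$ with $m=n$ for $n$ odd and $m=n/2$ for $n$ even, and to recognize this as the automorphism group of the $m\times m$ grid on the square torus.

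\textbf{Structure of $G_{4,4}$.} First I would recall that $G_{4,4}=\langle\rho_v,\rho_f\mid\rho_v^4=\rho_f^4=(\rho_v\rho_f)^2=1\rangle$ is the rotation subgroup of the symmetry group of the unit-square tiling of $\R^2$ (the wallpaper group $p4$), so it fits in an extension $1\to T\to G_{4,4}\to C_4\to1$ with $T\cong\Z^2$ the translation lattice and $C_4=\langle\bar\rho_v\rangle$; geometrically $\rho_v$ is the quarter-turn about a vertex, $\rho_f$ the quarter-turn about a face-centre, and $\rho_e:=\rho_v\rho_f$ the half-turn about an edge-midpoint. Explicit generators of $T$ are $t_1:=\rho_e\rho_v^2$ and $t_2:=\rho_v^2\rho_f^2$, translations by an edge vector and by a face-diagonal vector; since the corresponding plane vectors form a $\Z$-basis of $\Z^2$, one has $T=\langle t_1\rangle\oplus\langle t_2\rangle$ and $G_{4,4}=T\rtimes\langle\rho_v\rangle$, with $\rho_v$ acting on $T$ as the quarter-turn $\left(\begin{smallmatrix}0&-1\\1&0\end{smallmatrix}\right)$.

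\textbf{Reduction of the translation lattice (the crux).} Substituting $x=0$, $y=-1$ into the displayed formulas gives $\rho_v,\rho_f\in\GL(3,\Z)$ of determinant $1$. The key computation is that $t_1$ and $t_2$ are unipotent of the form $t_i=I+2N_i$ with $N_i$ integral, nilpotent, $\Z$-independent, and $N_iN_j=0$; the factor $2$ is forced by the relation $\sigma_0(v_0)=2v_1-v_0$, i.e.\ by the fact that the vertices of the tiling, read in a flag basis whose second and third vectors are an edge-midpoint and a face-centre, form an \emph{even} sublattice. Hence $a\,t_1+b\,t_2\mapsto I+2(aN_1+bN_2)$, and reading off the matrix entries shows this is $\equiv I\pmod n$ exactly when $2a\equiv2b\equiv0\pmod n$, i.e.\ when $(a,b)\in m\Z^2$ for $m=n/\gcd(2,n)$. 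So the image $\bar T_n$ of $T$ in $\GL(3,\Z/n)$ is $\Z^2/m\Z^2\cong(\Z/m)^2$. This is the step I expect to be the main obstacle: one must pin down the exact $2$-adic behaviour of the lattice so that the grid size is precisely $m$ (not $2m$ or $m/2$) and check that no further translations survive or collapse; the degenerate case $n=2$ also lives here, since then $\rho_v\equiv\rho_f\pmod2$ both have order $2$, giving $\bar G_2\cong\Z/2$, and the ``$1\times1$ grid'' must be read in this limiting sense — I would flag this exception explicitly.

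\textbf{The group and the curve.} For $n\ge3$ one has $\rho_v^2=\bigl[\begin{smallmatrix}1&2&2\\0&-1&0\\0&0&-1\end{smallmatrix}\bigr]$, which is neither $I$ nor congruent mod $n$ to any element of $T$; hence $\bar\rho_v$ has order $4$ and $\langle\bar\rho_v\rangle\cap\bar T_n=1$. Since $\bar T_n\triangleleft\bar G_n$ and $\bar G_n=\bar T_n\langle\bar\rho_v\rangle$, we get $\bar G_n\cong(\Z/m)^2\rtimes C_4$ with the quarter-turn action — exactly the rotation group of the $m\times m$ square grid on $\R^2/m\Z^2$; as $|\bar G_n|=4m^2$ equals the number of darts of that grid, the grid is a regular map realizing the quotient of $\Car$ defined by $\bar\rho_v,\bar\rho_f$. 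Finally, the regular map is a Galois cover $X\to\P^1$ branched only over $0,1,\infty$ with $X=\R^2/m\Z^2$; the deck group contains an order-$4$ holomorphic automorphism, which forces the underlying Riemann surface to be $\C/\Lambda$ with $\Lambda$ homothetic to $\Z[i]$, hence $j(X)=1728$ and $X\cong\{y^2=x^3+x\}$ with lattice $\langle1,i\rangle$. Since $\R^2/m\Z^2\cong\R^2/\Z^2$ for every $m$, all of these maps lie over this single elliptic curve, as claimed.
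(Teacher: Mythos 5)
Your argument is correct and hinges on the same crux as the paper's proof — that the generating translations reduce to matrices of the form $I+2N$ with $N$ integral (the paper's $I_3+R$ and $I_3+U$), so a translation by $(a,b)$ squares becomes trivial mod $n$ exactly when $n\mid 2a$ and $n\mid 2b$ — but you package it differently and, in one respect, more completely. The paper argues flag-by-flag: it exhibits $I_3+R$, $I_3+U$, notes the analogous shifts for the other flags of a square have the same shape, and then reads off the kernel; you instead use the decomposition $G_{4,4}=T\rtimes\langle\rho_v\rangle$ with explicit lattice generators $t_1=\rho_e\rho_v^2$, $t_2=\rho_v^2\rho_f^2$, compute the image of $T$ in $\GL(3,\Z/n)$ exactly via $t_i=I+2N_i$, $N_iN_j=0$, and then check separately that $\langle\bar\rho_v\rangle$ still injects and meets the image of $T$ trivially (via $\rho_v^2$ not being congruent to any translation for $n\ge 3$). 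That last step is a genuine addition: the paper's deduction tacitly assumes that no element with nontrivial rotational part dies mod $n$, and your $n=2$ computation ($\rho_v\equiv\rho_f$ of order $2$, so the image has order $2$ rather than the order $4$ of the ``$1\times 1$ grid'' map, because $\rho_v^2\equiv I$ there) shows that this is precisely where the assumption fails; flagging that exception is a correct catch, not a cosmetic caveat, since the proposition is stated for all $n\ge 2$. For the identification of the curve the two routes diverge: the paper cites the known square-torus dessin and writes explicit Belyi functions $c/\wp_i^2(kz)$, while you deduce $j=1728$ abstractly from the order-$4$ holomorphic automorphism fixing a point; your argument is self-contained, the paper's has the advantage of producing the Belyi map explicitly.
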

\begin{proof}
    In the case of $G_{4, 4}$ we have $\cos\theta=0, \cos\gamma=-1$, so the matrices of interest are given by 
    \[
\rho_v= 
\begin{bmatrix}
1 & 1 & 2 \\
0 & -1 & -2 \\ 
0 & 1 & 1 
\end{bmatrix},
\rho_f= 
\begin{bmatrix}
-1 & -1 & 0 \\
2 & 1  & 0 \\ 
0  & 1 & 1 
\end{bmatrix}. 
    \] 

    Taken in $\GL(3, \Z)$ these two matrices generate $G_{4, 4}$, which is represented by an infinite square grid which we may identify with the coordinate plane.  Reducing modulo $n$ is equivalent to considering their image in $\GL(3, \Z/n\Z)$.  The corresponding map can thus be computed by analyzing which transformations of the flag are represented by a matrix equivalent to the identity in $\GL(3, \Z/n\Z)$, and quotienting out by the subgroup they generate.   

Starting with the flag with coordinates $V = (1, 0), E = (1/2, 0), F = (1/2, 1/2)$, we see that the matrices $I_3+R$ and $I_3+U$, where  
\[
R = \begin{bmatrix}
-2 & -2 & -2 \\
2 & 2 & 2 \\ 
0 & 0 & 0 
\end{bmatrix}, 
U = \begin{bmatrix}
0 & 0 & 0 \\
-2 & -2 & -2 \\ 
2 & 2 & 2 
\end{bmatrix}
\] 
send it to the corresponding flag in the square to the right and to the square above, respectively.  On the other hand, the rotations are represented by the matrices
\[
\begin{bmatrix}
-1 & -1 & 0 \\
2 & 1 & 0 \\ 
0 & 0 & 1 
\end{bmatrix}, 
\begin{bmatrix}
-1 & 0 & 0 \\
0 & -1 & 0 \\ 
2 & 2 & 1 
\end{bmatrix}, 
\begin{bmatrix}
1 & 1 & 0 \\
-2 & -1 & 0 \\ 
2 & 1 & 1 
\end{bmatrix}.
\] 

One also sees that the matrices corresponding to shifting the flag to the square above and to the right for each of these three other flags in the same square are of the same form as $R$ and $U$.  One deduces from this that the only transformations equivalent to the identity are those which move the original flag to the same flag a multiple of $n/2$ squares horizontally and vertically for $n$ even, and a multiple of $n$ squares for $n$ odd. \\ 

The elliptic curve associated to this dessin is well-known to be the one with $j$-invariant 1728 and lattice given by $\langle 1, i\rangle$; see e.g. \cite{frechette}.  In the case of a 1 by 1 square, an explicit Belyi function is given by $f(z) = \frac{c}{\wp_i^2(z)}$, where $c$ is a constant used to ensure that the function is 1 at the midpoints of the edges and $\wp_i$ is the Weierstrass $\wp$-function on the lattice $\langle 1, i\rangle$.  In the case of a $k$ by $k$ square, we can simply replace this function by $f(z) = \frac{c}{\wp_i^2(kz)}$.
\end{proof}

\begin{remark}
    In this example, if we restrict $n$ to being equal to a prime $p$ we clearly do not obtain every possible map because the degree of each map will then be of the form $p^2$.  However, order considerations do not rule out the possibility that we can obtain them using arbitrary regular polyhedra over finite fields.  Indeed, since the order of $GL(3, \F_q)$ is divisible by any $n$ for an appropriate choice of $q$, there is no obvious restriction on what order regular polyhedra obtained in such a way may have. \\ 

    For an example of a map that can be obtained as a quotient of $G_{4, 4}$ not of such a simple form, see \cite{sijsling}, Figure 3.7. 
\end{remark}

One can use a similar approach to reducing $G_{6, 3}$ modulo $n$ and obtain a triangular grid rather than a square grid.  In this case, one can use the meromorphic function $f(z) = \frac{c}{\wp_{e^{i\pi/3}}(z)^3}$, scaled appropriately, to exhibit an explicit Belyi function on the elliptic curve with $j$-invariant 0 and lattice given by $\langle 1, e^{i\pi/3}\rangle$.

\subsection{Additional remarks} 
Under the interpretation we have taken, one may try to complete the classification of which regular maps and algebraic curves can be obtained over finite fields in genus 1.  In higher genus one can ask the same question, which seems to be more difficult.  However, another line of work involves fully understanding the observations of Grothendieck in Section 4 of \cite{esquisse}.  Indeed, in this note we have pointed out some instances where our interpretation does not account for everything outlined by Grothendieck.  In particular, we highlight again the fact that our universal formulas defining the reflections and rotations are defined over $\Z[\cos\theta, \cos\gamma]$, rather than using the doubles of the cosines as Grothendieck states.  For an octahedron we have $\cos\theta = \frac{1}{2}$, so under our conventions it is not clear how to consider the octahedron over $\F_2$.  This point seems significant because Grothendieck describes the octahedron over $\F_2$ as part of an interesting phenomenon, which he describes as follows. \\ 

``Thus, examining the Pythagorean polyhedra
one after the other, I saw that the same small miracle was repeated each
time, which I called the \underline{combinatorial paradigm} of the polyhedra under
consideration. Roughly speaking, it can be described by saying that when
we consider the specialisation of the polyhedra in the or one of the most singular characteristic(s) (namely characteristics 2 and 5 for the icosahedron, characteristic 2 for the octahedron), we read off from the geometric
regular polyhedron over the finite field ($\F_4$ and $\F_5$ for the icosahedron, $\F_2$
for the octahedron) a particularly elegant (and unexpected) description of
the combinatorics of the polyhedron." \\  

One possible approach may be to use a choice of basis different from the canonical one given by a flag.  This may allow, say, the octahedron to be reduced to $\F_2$, though it appears that the application of this method would be specific to the polyhedron in question.  Another approach may be to consider the polyhedron in projective space, which Grothendieck indeed mentioned in \cite{esquisse} as a superior alternative to considering them in affine space.  We conclude that though the approach described in this note explain some of Grothendieck's ideas on regular polyhedra, some variations will likely be necessary to realize them fully.

\end{document}